\newtheorem{thm}{Theorem}[section] 
\newtheorem{cor}[thm]{Corollary}
\newtheorem{lem}[thm]{Lemma}
\newtheorem{ques}[thm]{Question}
\theoremstyle{definition}
\newtheorem{rem}[thm]{Remark}
\newcommand\operA[2]{{\if!#2!\operatorname{#1}\else{\operatorname{#1}_{#2}^{\phantom{I}}}\fi}} 
\newcommand{\Trace}[1][]{\if!#1!\operatorname{Tr}\else{\operatorname{Tr}_{#1}^{\phantom{I}}}\fi} 
\long\def\forget#1\forgotten{{}} %
\def\({\left(}
\def\){\right)}
\newcommand\LAY[3][]{{\begin{array}{c}\mbox{#2} \if#1!{}\else{+}\fi \\ \mbox{#3}\end{array}}}
\def\ps@pprintTitle{%
 \let\@oddhead\@empty
 \let\@evenhead\@empty
 \def\@oddfoot{}%
 \let\@evenfoot\@oddfoot}
\newcommand{\bigperp}{%
  \mathop{\mathpalette\bigp@rp\relax}%
  \displaylimits
}
\newcommand{\bigp@rp}[2]{%
  \vcenter{
    \m@th\hbox{\scalebox{\ifx#1\displaystyle2.1\else1.5\fi}{$#1\perp$}}
  }%
}
\newcommand{\qf}[1]{\mbox{$\langle #1\rangle $}}
\newcommand{\Om}{\Omega}
\newcommand{\wg}{\wedge}
\newcommand{\bwg}{\bigwedge}
\renewcommand{\geq}{\geqslant}
\renewcommand{\leq}{\leqslant}
\DeclareMathOperator{\coker}{coker}
\newif\iffurther
\journal{??}
\begin{document}
\begin{frontmatter}

\title{Differential Forms, Linked Fields and the $u$-Invariant}

\author{Adam Chapman}
\ead{adam1chapman@yahoo.com}
\address{Department of Computer Science, Tel-Hai Academic College, Upper Galilee, 12208 Israel}
\author{Andrew Dolphin}
\ead{Andrew.Dolphin@uantwerpen.be}
\address{Department of Mathematics, Ghent University,  Ghent, Belgium}

\begin{abstract}
We associate an Albert form to any pair of cyclic algebras of prime degree $p$ over a field $F$ with $\operatorname{char}(F)=p$ which coincides with the classical Albert form when $p=2$. We prove that if every Albert form is isotropic then $H^4(F)=0$.
As a result, we obtain that if $F$ is a linked field with $\operatorname{char}(F)=2$ then its $u$-invariant is either $0,2,4$ or $8$.
\end{abstract}

\begin{keyword}
Differential Forms, Quadratic Forms, Linked Fields, $u$-Invariant, Fields of Finite Characteristic.
\MSC[2010] 11E81 (primary); 11E04, 16K20 (secondary)
\end{keyword}
\end{frontmatter}

\section{Introduction}

Given a field $F$, a quaternion algebra over $F$ is a central simple $F$-algebra of degree $2$.
The maximal subfields of  quaternion division algebras over $F$ are quadratic field extensions of $F$.
When $\operatorname{char}(F) \neq 2$, all quadratic field extensions are separable.  
When $\operatorname{char}(F)=2$, there are two types of quadratic field extensions: the separable type which is of the form $F[x : x^2+x=\alpha]$ for some $\alpha \in F \setminus \{\lambda^2+\lambda : \lambda \in F\}$, and the inseparable type which is of the form $F[\sqrt{\alpha}]$ for some $\alpha \in F^\times \setminus (F^\times)^2$.
In this case, any quaternion division algebra contains both types of field extensions, which can be seen by its 
 symbol presentation 
$$[\alpha,\beta)_{2,F}=F \langle x,y : x^2+x=\alpha, y^2=\beta, y x y^{-1}=x+1 \rangle\,.$$
If    $\operatorname{char}(F) = p$ for some prime $p>0$, we let $\wp(F)$ denote the additive subgroup $ \{\lambda^p-\lambda: \lambda \in F\}$. Then 
 we may consider cyclic division algebras over $F$ of degree $p$. Any such algebra admits a symbol presentation 
$$[\alpha,\beta)_{p,F}=F \langle x,y : x^p-x=\alpha, y^p=\beta, y x y^{-1}=x+1 \rangle\,$$
where $\alpha \in F \setminus\wp(F)$ and $\beta \in F^\times \setminus (F^\times)^p$. In particular, these algebras contain both cyclic separable   field extensions of $F$ (e.g.~$F[x]$) of degree $p$ and purely inseparable field extensions of $F$ of degree $p$ (e.g.~$F[y]$).

Two quaternion $F$-algebras are called \emph{linked} if they share a common maximal subfield. When $\operatorname{char}(F)=2$, the notion of linkage can be refined to \emph{separable linkage} and \emph{inseparable linkage} depending on the type of quadratic field extension of the center they share. Inseparable linkage implies separable linkage, but  the converse does not hold in general (see \cite{Lam:2002}). This observation was extended to Hurwitz algebras in \cite{ElduqueVilla:2005} and to quadratic Pfister forms in \cite{Faivre:thesis}.
We similarly call cyclic $p$-algebras of prime degree $p$ over a field $F$ \emph{separably  linked} (resp.~\emph{inseparably linked}) if they share a common maximal subfield that is a cyclic separable (resp.~purely inseparable) extension of $F$ of degree $p$. The above linkage result for quaternion algebras was generalized to this setting in \cite{Chapman:2015}.

A field $F$ is called \emph{linked} if every two quaternion $F$-algebras are linked. When $\operatorname{char}(F)=2$, a field $F$ is called inseparably linked  if every two quaternion $F$-algebras are inseparably linked. Note that any inseparably linked field is clearly linked. 

The $u$-invariant of a field $F$, denoted by $u(F)$, is defined to be the maximal dimension of an anisotropic nonsingular quadratic form over $F$ of finite order in $W_q F$.
Note that when $-1$ can be written as a sum of squares in $F$, and in particular when $\operatorname{char}(F)=2$, every form in $I_q F$ is of finite order.
It was proven in \cite[Main Theorem]{ElmanLam:1973} that if $F$ is a linked field with $\operatorname{char}(F) \neq 2$ then the possible values $u(F)$ can take are $0,1,2,4$ and 8.
For fields $F$ of characteristic $2$, it was shown in
 \cite[Theorem 3.1]{Baeza:1982} that $F$ is inseparably linked if and only if $u(F) \leq 4$. In particular, this means that a linked field $F$ with $u(F)=8$ is not inseparably linked. For example, the field of iterated Laurent series in two variables $\mathbb{F}_2((\alpha))((\beta))$ over $\mathbb{F}_2$ is linked by \cite[Corollary 3.5]{AJ95}, but not inseparably linked, because its $u$-invariant is $8$. There are also many examples of inseparably linked fields, such as local fields, global fields and Laurent series over perfect fields (see \cite[Section 6]{CDL}). In 
 \cite[Theorem 3.3.10]{Faivre:thesis} it was shown that if $F$ is a linked field and $I_q^4 F=0$  (see \cref{secbfqf}) then $u(F)$ is either $0,2,4$ or $8$.
We are interested in removing the assumption that $I_q^4 F=0$ from this result. 

We approach this problem from the more general setting of differential forms over fields of characteristic $p$ (see Section \ref{Diff}). We associate an Albert form to any pair of cyclic algebras of degree $p$ over a field $F$ with $\operatorname{char}(F)=p$ which coincides with the classical Albert form when $p=2$. We prove that if every Albert form is isotropic then $H^4(F)=0$. When $p=2$, this means that if $F$ is linked then $I_q^4 F=0$. Together with  \cite[Theorem 3.3.10]{Faivre:thesis}, this gives that the possible values of $u(F)$ are $0,2,4$ and $8$.

\section{Bilinear and Quadratic Pfister Forms}\label{secbfqf}

We  recall  certain  results and terminology  we use from quadratic  form theory. 
We refer to  \cite[Chapters 1 and 2]{EKM} for standard notation, basic results and  as a general reference on quadratic forms.

 Let $F$ be a field of characteristic 2.
A symmetric bilinear form over $F$ is a map $B : V \times V \rightarrow F$ satisfying $B(v,w)=B(w,v)$, $B(cv,w)=c B(v,w)$ and $B(v+w,t)=B(v,t)+B(w,t)$ for all $v,w,t \in V$ and $c \in F$ where $V$ is an $n$-dimensional $F$-vector space.
A symmetric bilinear form $B$ is degenerate if there exists a vector $v \in V\setminus\{0\}$ such that $B(v,w)=0$ for all $w \in V$.
If such a vector does not exist, we say that $B$ is nondegenerate.
Two symmetric bilinear  forms $B: V\times V \rightarrow F$ and $B' : W \times W\rightarrow F$ are isometric if there exists an isomorphism $M : V \rightarrow W$ such that $B(v,v')=B'(Mv,Mv')$ for all $v,v' \in V$.

A quadratic form over $F$ is a map $\varphi : V \rightarrow F$ 
such that $\varphi(av)= a^2\varphi(v)$ for all $a\in F$ and $v\in V$ and the map defined by  $B_\varphi(v,w)=\varphi(v+w)-\varphi(v)-\varphi(w)$ for all $v,w\in V$ is a bilinear form. The bilinear form $B_\varphi$ is called the polar form of $\varphi$ and is clearly symmetric.  
  Two quadratic forms $\varphi : V \rightarrow F$ and $\psi : W \rightarrow F$ are isometric if there exists an isomorphism $M : V \rightarrow W$ such that $\varphi(v)=\psi(Mv)$ for all $v \in V$.
We are interested in the isometry classes of quadratic forms, so when we write $\varphi=\psi$ we actually mean that they are isometric.

 We say that $\varphi$ is singular if $B_\varphi$ is degenerate, and that $\varphi$ is nonsingular if $B_\varphi$ is nondegenerate.
Every nonsingular form $\varphi$ is even dimensional and can be written as 
$$\varphi=[\alpha_1,\beta_1] \perp \dots \perp [\alpha_n,\beta_n]$$
for some $\alpha_1,\dots,\beta_n \in F$, where $[\alpha,\beta]$ denotes the two-dimensional quadratic form $\psi(x,y)=\alpha x^2+xy+\beta y^2$ and $\perp$ denotes  the orthogonal sum of quadratic forms.

We say that a quadratic form $\varphi : V \rightarrow F$ is isotropic if there exists a vector $v \in V\setminus\{0\}$ such that $\varphi(v)=0$.
If such a vector does not exist, we say that $\varphi$ is anisotropic.
The unique nonsingular two-dimensional isotropic quadratic form is $\varmathbb{H}=[0,0]$, which we call the {hyperbolic plane}.
A hyperbolic form is an orthogonal sum of hyperbolic planes.
We say that two nonsingular quadratic forms are Witt equivalent if their orthogonal sum is a hyperbolic form.

We denote by $\langle \alpha_1,\dots,\alpha_n \rangle$ the diagonal bilinear form given by  
$(x,y)\mapsto \sum_{i=1}^n \alpha_ix_iy_i$. 
Given two symmetric bilinear forms $B_1:V\times V\rightarrow F$ and $B_2:W\times W\rightarrow F$,   the tensor product of $B_1$ and $B_2$ denoted $B_1\otimes B_2$ is 
  the unique $F$-bilinear map $B_1\otimes B_2:(V\otimes_F W)\times (V\otimes_F W)\rightarrow F$ such that 
$$(B_1\otimes B_2)\left( (v_1\otimes w_1), (v_2\otimes w_2)\right) =B_1(v_1,v_2)\cdot B_2(w_1,w_2) $$
for all $w_1,w_2\in W, v_1,v_2\in V$.
A bilinear $n$-fold Pfister form over $F$ is a symmetric bilinear form isometric to 
$\langle 1,\alpha_1\rangle \otimes \dots \otimes \langle 1,\alpha_n\rangle$
for some $\alpha_1,\alpha_2,\dots,\alpha_n \in F^\times$.
We denote such a form by $\langle \langle \alpha_1,\alpha_2,\dots,\alpha_n \rangle \rangle$. By convention, the bilinear 0-fold Pfister form is $\langle 1 \rangle$.

  Let $B:V\times V\rightarrow F$ be a symmetric bilinear form over $F$ and $\varphi:W\rightarrow F$ be a quadratic form over $F$. We may define a  quadratic form $B\otimes \varphi:V\otimes_F W\rightarrow F$ determined by the rule that 
$( B\otimes \varphi) (v\otimes w)=  B(v,v) \cdot \varphi(w)$
for all $w\in W, v\in V$. We call this quadratic form  the tensor product of $B$ and $\varphi$. 
A quadratic $n$-fold Pfister form over $F$ is a tensor product of a bilinear $(n-1)$-fold Pfister form  $\langle \langle \alpha_1,\alpha_2,\dots,\alpha_{n-1} \rangle \rangle$ and a two-dimensional quadratic form  $[1,\beta]$ for some $\beta \in F$.
We denote such a form by $\langle \langle \alpha_1,\dots,\alpha_{n-1},\beta]]$. Quadratic $n$-fold Pfister forms are isotropic if and only if they are hyperbolic (see  \cite[(9.10)]{EKM}).

The Witt equivalence classes of nonsingular quadratic forms over $F$ form an abelian group, called the Witt group of $F$, with $\perp$ as the binary group operation and $\varmathbb{H}$ as the zero element. We denote this group by $I_q F$ or $I_q^1 F$.
This group is generated by scalar multiples of quadratic 1-fold Pfister forms.
Let $I_q^n F$ denote the subgroup generated by scalar multiples of quadratic $n$-fold Pfister forms over $F$.

Let  $\varphi=[\alpha_1,\beta_1] \perp \dots \perp [\alpha_n,\beta_n]$ be a nonsingular quadratic form. The Arf invariant of $\varphi$, denoted $\triangle(\varphi)$, is the class of $\alpha_1\beta_1+\cdots +\alpha_n\beta_n$ in the additive group $F/\wp (F)$ (see \cite[\S13]{EKM}).
The Arf invariant only depends on the class of the form $\varphi$ in $I_qF$.
An Albert form over a field of characteristic $2$ is a $6$-dimensional nonsingular quadratic form 
with trivial Arf invariant. 
 To any  central simple algebra isomorphic to the tensor product of two quaternion algebras  over $F$, we may associate the Witt class of the orthogonal sum of the two norm forms of the quaternion algebras  (these norm forms are $2$-fold quadratic Pfister forms). This uniquely determines a similarity class of  Albert forms. Conversely, every similarity class of Albert forms determines such a central simple algebra over $F$  (see \cite{MammoneShapiro} for more details).


\section{Differential Forms}\label{Diff}

Let $F$ be a field of characteristic $p>0$. For $a\in F$, we  denote the  extension of $F$ isomorphic to $F[T]/(T^p-T-a)$ by $F_a$. If $a\notin \wp(F)$, then this is a cyclic field extension of degree $p$ and we denote the norm map by $N_{F_a/F}:F_a\rightarrow F$. 
Otherwise $F_a$ is an \'etale extension isomorphic to $F\times \ldots \times F$ ($p$ times), and one defines a norm map by taking the determinant of the $F$-linear map given by multiplying by an element of $F_a$. We again denote this map by $N_{F_a/F}$. It is easily seen that $N_{F_a/F}$ has a non-trivial zero if and only if $F_a$ is not a field if and only if $a\notin \wp(F)$. 


The space 
$\Om^1(F)$ of absolute {differential $1$-forms} 
over $F$ is defined to be the
$F$-vector space generated by symbols $da$, $a\in F$, subject to the relations
given by additivity, $d(a+b)=da+db$, and the product rule, $d(ab)=adb+bda$.
In particular, one has $d(F^p)=0$ for $F^p=\{ a^p\,|\,a\in F\}$, and
$d\,:\,F\to \Om^1(F)$ is an $F^p$-derivation.

The space of {$n$-differentials} $\Om^n(F)$ ($n\geq 1$) is then defined by the
$n$-fold exterior power, 
$\Om^n(F):=\bwg^n(\Om^1(F))$, which is therefore an $F$-vector space generated
by {symbols} $da_1\wg\ldots\wg da_n$, $a_i\in F$. The derivation $d$ extends to an 
operator $d\,:\,\Om^n(F)\to \Om^{n+1}(F)$ by $d(a_0da_1\wg\ldots\wg da_n)=
da_0\wg da_1\wg\ldots\wg da_n$.  We put $\Om^0(F)=F$, $\Om^n(F)=0$ for $n<0$, and
$\Om(F)=\bigoplus_{n\geq 0}\Om^n(F)$,  the algebra of differential forms
over $F$ with multiplication naturally defined by
$$(a_0da_1\wg\ldots\wg da_n)(b_0db_1\wg\ldots\wg db_m)=
a_0b_0da_1\wg\ldots\wg da_n\wg db_1\wg\ldots\wg db_m\,.$$  Note that the wedge product is anti-commutative. That is $da\wg db=-db\wg da$.

There exists a well-defined group homomorphism $\Om^n(F)\to \Om^n(F)/d\Om^{n-1}(F)$, the
Artin-Schreier map $\wp$, which acts on {logarithmic} differentials as follows:
$$b\frac{da_1}{a_1}\wg\ldots\wg \frac{da_n}{a_n}\,\longmapsto\,
(b^p-b)\frac{da_1}{a_1}\wg\ldots\wg \frac{da_n}{a_n}$$
We define $H^{n+1}(F):=\coker(\wp)$.
The  connection between the groups $H^{n+1}(F)$  and quadratic forms was shown
by Kato \cite{k1}:
\begin{thm}\label{kato}  Let $F$ be a field of characteristic $2$.  Then there is an 
isomorphism $\alpha_{n,F}\,:\,H^{n+1}(F) \stackrel{\sim}{\longrightarrow} I_q^{n+1}(F)/I_q^{n+2}(F)$ 
defined on generators as follows:
$$b\frac{da_1}{a_1}\wg\ldots\wg \frac{da_n}{a_n}
\longmapsto   \langle \langle a_1,\dots,a_n,b]]  \mod I_q^{n+2}(F)\ .$$
\end{thm}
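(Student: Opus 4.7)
My plan is to construct $\alpha_{n,F}$ by first defining a candidate map on $\Omega^n(F)$, verifying it descends through the two quotients defining $H^{n+1}(F) = \operatorname{coker}(\wp)$, and then exhibiting an inverse.

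I would extend the rule on logarithmic forms to all of $\Omega^n(F)$ via the identity $a_0\,da_1 \wedge \cdots \wedge da_n = (a_0 a_1 \cdots a_n)\,\frac{da_1}{a_1} \wedge \cdots \wedge \frac{da_n}{a_n}$ for $a_i \in F^\times$, giving the assignment $a_0\,da_1 \wedge \cdots \wedge da_n \mapsto \langle\langle a_1,\ldots,a_n, a_0 a_1 \cdots a_n]] \pmod{I_q^{n+2}(F)}$. Well-definedness on $\Omega^n(F)$ reduces to three identities modulo $I_q^{n+2}(F)$: multiplicativity in each slot (Leibniz), additivity in the scalar, and the alternating relation $\langle\langle a,a,a_3,\ldots,a_n,b]] \equiv 0$. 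The alternating relation holds because $[a,a)_{2,F}$ admits the idempotent $x+y$ in its standard presentation and is therefore split, so $\langle\langle a,a]]$ is hyperbolic; the other two identities are standard computations for Pfister forms modulo one higher power of the fundamental ideal.

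To see the map factors through $H^{n+1}(F)$, an exact form $dc_1 \wedge \cdots \wedge dc_n$ maps to $\langle\langle c_1,\ldots,c_n, c_1\cdots c_n]]$, which by repeated use of the Leibniz identity modulo $I_q^{n+2}$ rewrites as $\sum_i \langle\langle c_1,\ldots,c_n, c_i]]$, each term having a repeated slot and thus lying in $I_q^{n+2}(F)$. For the Artin-Schreier relation with $p=2$, the substitution $x \mapsto x + by$ in $[1, b^2+b]$ yields $\varmathbb{H}$, so $\langle\langle a_1,\ldots,a_n, b^2-b]]$ is hyperbolic. Surjectivity is then immediate because $(n+1)$-fold quadratic Pfister forms generate $I_q^{n+1}(F)/I_q^{n+2}(F)$: a scalar multiple $\langle c \rangle \otimes \pi$ differs from $\pi$ by the $(n+2)$-fold Pfister form $\pi \otimes \langle\langle c \rangle\rangle$, which lies in $I_q^{n+2}(F)$.

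The main obstacle is injectivity. The strategy is to produce an inverse $\beta$ sending a Pfister symbol $\langle\langle a_1,\ldots,a_n,b]]$ to $b\,\frac{da_1}{a_1}\wedge\cdots\wedge\frac{da_n}{a_n}$ in $H^{n+1}(F)$, and verify that every chain equivalence between Pfister forms modulo $I_q^{n+2}(F)$ corresponds to a combination of exact and Artin-Schreier relations in $\Omega^n(F)$. This well-definedness is the technical heart of Kato's argument in \cite{k1}, requiring a careful filtration analysis of $W_q F$ together with explicit control over which sums of Pfister forms fall in $I_q^{n+2}(F)$. Once $\beta$ is established, the mutual identities $\alpha_{n,F}\circ\beta = \operatorname{id}$ and $\beta\circ\alpha_{n,F} = \operatorname{id}$ hold on generators by direct inspection, completing the proof.
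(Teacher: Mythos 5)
The paper offers no proof of this statement to compare against: it is Kato's theorem, imported verbatim from \cite{k1} and used as a black box. Measured on its own terms, your proposal proves at most the easy half. You construct $\alpha_{n,F}$, check (most of) its well-definedness, and observe surjectivity, but for injectivity you write that the well-definedness of the inverse $\beta$ ``is the technical heart of Kato's argument in \cite{k1}, requiring a careful filtration analysis.'' That is a citation, not a proof; controlling exactly which sums of $(n+1)$-fold quadratic Pfister forms lie in $I_q^{n+2}(F)$ is where essentially all of the difficulty of the theorem resides, and no independent argument for it is offered. So the proposal does not establish the theorem.

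Two of the steps you do carry out also need repair. First, the vanishing on $d\Omega^{n-1}(F)$: the quadratic (Artin--Schreier) slot of $\langle\langle a_1,\dots,a_n,b]]$ is additive in $b$ modulo $I_q^{n+2}(F)$, not logarithmic, so ``repeated use of the Leibniz identity'' cannot convert the product $c_1\cdots c_n$ in $\langle\langle c_1,\dots,c_n,c_1\cdots c_n]]$ into the sum $\sum_i\langle\langle c_1,\dots,c_n,c_i]]$; Leibniz only governs the bilinear slots. The conclusion is nevertheless true for a different reason: $\langle\langle c_1,\dots,c_n,c_1\cdots c_n]]$ is isotropic (add the value $1$ of the summand $[1,c_1\cdots c_n]$ at $(1,0)$ to the value $1$ of the summand $c_1\cdots c_n[1,c_1\cdots c_n]$ at $(0,(c_1\cdots c_n)^{-1})$), hence hyperbolic. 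Second, the alternating relation requires $\langle\langle a,a,a_3,\dots,a_n,b]]$ with $a$ repeated in two \emph{bilinear} slots, whereas the splitting of $[a,a)_{2,F}$ shows the hyperbolicity of $\langle\langle a,a]]$, which has $a$ in one bilinear and one quadratic slot --- a different (also true) fact. The relevant relation follows instead from the metabolicity of the bilinear form $\langle\langle a,a\rangle\rangle$, or from the observation that $\langle\langle a,a,b]]\cong\psi\perp\psi$ with $\psi=[1,b]\perp a[1,b]$, which is hyperbolic in characteristic $2$. These local issues are fixable, but the missing injectivity argument is not: either the theorem is quoted from \cite{k1}, as the paper does, or the inverse map's well-definedness must be proved in full.
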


The $p$-torsion part  of the Brauer group of $F$ is known to be isomorphic to $H^2(F)$ (see \cite[Section 9.2]{GilleSzamuely2006}).
The isomorphism is given by 
$$[\alpha,\beta)_{p,F} \mapsto \alpha \frac{d \beta}{\beta}\,.$$

The following lemma records certain  equalities for later use.

\begin{lem}\label{calcs} Take $a_1\ldots, a_n \in F^\times$  and  $b\in F\setminus\wp(F)$. 
Let $0\neq \beta = N_{F_b/F}(u)$ for some $u\in F_b$. 
\begin{enumerate}
\item[$(a)$]  For all $i=1,\ldots, n$ we have  $$b\frac{da_1}{a_1}\wg\ldots\wg \frac{da_n}{a_n} =(b+a_i)\frac{da_1}{a_1}\wg\ldots\wg \frac{da_n}{a_n}\mod d\Om^{n-1}(F)\,.$$ 
\item[$(b)$]  For all $i=1,\ldots, n$ we have in $H^{n+1}(F)$  $$b\frac{da_1}{a_1}\wg\ldots\wg \frac{da_n}{a_n} =b\frac{da_1}{a_1}\wg\ldots\wg\frac{d(a_i\beta)}{a_i\beta}\wg\ldots\wg \frac{da_n}{a_n}\,.$$ 
\item[$(c)$]   For all $i=1,\ldots, n$  we have in $H^{n+1}(F)$ $$b\frac{da_1}{a_1}\wg\ldots\wg \frac{da_n}{a_n} =(b+ a_i \beta)\frac{da_1}{a_1}\wg\ldots\wg\frac{d(a_i\beta)}{a_i\beta}\wg\ldots\wg \frac{da_n}{a_n}\,.$$ 
\end{enumerate}
\end{lem}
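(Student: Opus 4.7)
My plan is to handle the three parts in order: (a) by an exact-form computation, (b) by reducing to the classical splitting criterion for degree-$p$ cyclic algebras, and (c) by combining (a) and (b).

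For (a), I would note that the difference between the two sides is $a_i\cdot\frac{da_1}{a_1}\wg\ldots\wg\frac{da_n}{a_n}$, so it suffices to place this element in $d\Om^{n-1}(F)$. Set
\[
\omega_i\,=\,\frac{da_1}{a_1}\wg\ldots\wg\widehat{\frac{da_i}{a_i}}\wg\ldots\wg\frac{da_n}{a_n}\,\in\,\Om^{n-1}(F).
\]
Since $d^2=0$ gives $d\bigl(\frac{da_j}{a_j}\bigr)=0$ for every $j$, the form $\omega_i$ is closed and $d(a_i\omega_i)=da_i\wg\omega_i$. Shuffling $da_i=a_i\cdot\frac{da_i}{a_i}$ back into slot $i$ yields $\pm\,a_i\cdot\frac{da_1}{a_1}\wg\ldots\wg\frac{da_n}{a_n}\in d\Om^{n-1}(F)$, and the sign is harmless because $d\Om^{n-1}(F)$ is a subgroup.

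For (b), the logarithmic derivative identity $\frac{d(a_i\beta)}{a_i\beta}=\frac{da_i}{a_i}+\frac{d\beta}{\beta}$ shows that the difference of the two sides equals
\[
\Theta\,=\,b\cdot\frac{da_1}{a_1}\wg\ldots\wg\frac{d\beta}{\beta}\wg\ldots\wg\frac{da_n}{a_n}
\]
with $\frac{d\beta}{\beta}$ in slot $i$. I would permute this factor to the front to rewrite $\Theta=\pm\bigl(b\frac{d\beta}{\beta}\bigr)\wg\omega_i$. The hypothesis $\beta\in N_{F_b/F}(F_b^\times)$ says that the cyclic algebra $[b,\beta)_{p,F}$ is split, so under the isomorphism $H^2(F)\isom\Brp[p](F)$ recalled just above the lemma this reads $b\frac{d\beta}{\beta}=0$ in $H^2(F)$. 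Hence in $\Om^1(F)$ we may write $b\frac{d\beta}{\beta}=dh+\sum_j(c_j^p-c_j)\frac{d\gamma_j}{\gamma_j}$ for some $h,c_j\in F$ and $\gamma_j\in F^\times$. Wedging with $\omega_i$, the exact part becomes $d(h\omega_i)$ since $\omega_i$ is closed, while each Artin--Schreier summand becomes $\wp\bigl(c_j\frac{d\gamma_j}{\gamma_j}\wg\omega_i\bigr)$ because $\frac{d\gamma_j}{\gamma_j}\wg\omega_i$ is again logarithmic. Thus $\Theta\in d\Om^{n-1}(F)+\mathrm{Im}(\wp)$, so $\Theta=0$ in $H^{n+1}(F)$.

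Part (c) is then immediate from (a) and (b): (b) lets us replace $\frac{da_i}{a_i}$ by $\frac{d(a_i\beta)}{a_i\beta}$ without changing the class in $H^{n+1}(F)$, and (a) applied to the modified sequence $a_1,\ldots,a_i\beta,\ldots,a_n$ then replaces the scalar $b$ by $b+a_i\beta$ modulo $d\Om^{n-1}(F)$. The main point of the argument, and the only place where the assumption that $\beta$ is a norm is used, is the reduction in (b) to $b\frac{d\beta}{\beta}=0$ in $H^2(F)$. Once this is in hand, the upgrade to $H^{n+1}(F)$ is essentially bookkeeping, but it hinges on the stability of $d\Om^{\bullet}(F)$ and $\mathrm{Im}(\wp)$ under wedging with logarithmic forms, which must be spelled out carefully.
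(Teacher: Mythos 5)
Your proof is correct and follows essentially the same route as the paper: part (a) by exhibiting $a_i\frac{da_1}{a_1}\wg\cdots\wg\frac{da_n}{a_n}$ as an exact form, part (b) by reducing to the $n=1$ fact that $b\frac{d\beta}{\beta}=0$ in $H^2(F)$ because $[b,\beta)_{p,F}$ is split when $\beta$ is a norm (the paper instead cites the isomorphism $[b,a\beta)\cong[b,a)$ from Berhuy--Oggier, which is the same underlying fact, and your explicit wedging argument fills in the reduction from general $n$ to $n=1$ that the paper merely asserts), and part (c) by combining the two. The only nitpick is that $d\bigl(\frac{da_j}{a_j}\bigr)=0$ does not follow from $d^2=0$ alone but from $d(a_j^{-1})\wg da_j=-a_j^{-2}\,da_j\wg da_j=0$, which is exactly the computation the paper performs at the start of its proof.
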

\begin{proof}
In all the  statements, it suffices to consider the case  $i=1$.  
Note  that as 
$$ d\left(b^{-1}\right)\wg db= d\left(\frac{b^{p-1}}{b^p} \right)\wg db = -\frac{ b^{p-2}}{b^p}db\wg db=0$$ for all $b\in F^\times$
we have 
$$d\left(b\frac{da_1}{a_1}\wg\ldots\wg \frac{da_n}{a_n}\right)=  db\wg\frac{da_1}{a_1}\wg\ldots\wg \frac{da_n}{a_n} \in d\Om^{n-1}(F)\,.$$

We first show $(a)$. We have that  
$$ a_1\frac{da_1}{a_1}\wg\ldots\wg \frac{da_n}{a_n}= da_1\wg \frac{da_2}{a_2}\wg\ldots\wg \frac{da_n}{a_n} = d\left({a_1} \frac{da_2}{a_2}\wg\ldots\wg \frac{da_n}{a_n}\right)\in d\Om^{n-1}(F)\,.$$
Hence the result follows from the additivity of $d$.

For $(b)$,   it suffices to consider the case $n=1$. In this case, the result follows from \cite[VII.1.9, (2)]{BO} via identifying cyclic $p$-algebras and symbols in $H^2(F)$.
Statement $(c)$  then follows immediately from $(a)$ and $(b)$.
\end{proof}

\section{Albert $p$-forms}\label{secal}
 
Let $F$ be a field of characteristic $p>0$. 
For $\alpha,\beta\in F$ and $\gamma,\delta\in F^\times$ the map $A(\alpha,\beta,\gamma,\delta): F_{\alpha+\beta} \oplus F_\alpha \oplus F_\beta \rightarrow F$
  given by 
$$(x,y,z)\mapsto N_{F_{\alpha+\beta}/F}(x)+\gamma N_{F_\alpha/F}(y)+\delta N_{F_\beta/F}(z)$$
is called an Albert $p$-form.
By the  pure part of the Albert $p$-form $A(\alpha,\beta,\gamma,\delta)$ we mean the restriction of $A(\alpha,\beta,\gamma,\delta)$ to 
$ F \oplus F_\alpha \oplus F_\beta \rightarrow F$.

\begin{rem}\label{Albertlinkage}
Note that for $p=2$ an Albert  $p$-form is   an   Albert form as defined in Section $2$. We also note the following:
\begin{enumerate}
\item If the Albert $p$-form above  has a non-trivial zero, then the cyclic algebras $[\alpha,\gamma)_{p,F}$ and $[\beta,\delta)_{p,F}$ are separably  linked. If $p=2$, then the converse also holds.
\item If the pure part of the Albert $p$-form above has a nontrivial zero, then the cyclic algebras $[\alpha,\gamma)_{p,F}$ and $[\beta,\delta)_{p,F}$ are inseparably linked. If $p=2$, then the converse also holds.
\end{enumerate}
\end{rem}

\begin{proof}
The `if'  statements follow immediately from \cite[Lemma 2.2]{Chapman:2017}. The converse statements for $p=2$ can be found in \cite{MammoneShapiro}.
\end{proof}

\begin{lem}\label{new}
Take  $\alpha\in F$ and  $\beta\in F^\times$. Then there exist $\alpha_1,\alpha_2\in F$ and $u\in F^\times$ such that  $\alpha= \alpha_1+\alpha_2$ and 
$$\alpha \frac{d \beta}{\beta}=  \alpha_1 \frac{d \beta}{\beta} =\alpha_2 \frac{d \beta u}{\beta  u} \in H^2(F)\,.$$
\end{lem}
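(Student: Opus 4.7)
The natural strategy is to invoke Lemma~\ref{calcs}(c) with $n=1$. Pick any element $v \in F_\alpha^\times$ and set $u = N_{F_\alpha/F}(v) \in F^\times$, so that $u$ is automatically a norm from $F_\alpha/F$. (If $\alpha \in \wp(F)$, then $F_\alpha$ is an \'etale algebra and the norm map is still well defined.) Applying Lemma~\ref{calcs}(c) with $b = \alpha$, $a_1 = \beta$, and the norm parameter equal to $u$, one obtains
$$\alpha \frac{d\beta}{\beta} \;=\; (\alpha + \beta u)\, \frac{d(\beta u)}{\beta u} \qquad \text{in } H^2(F).$$
This suggests the decomposition $\alpha_2 = \alpha + \beta u$, $\alpha_1 = -\beta u$, so that $\alpha = \alpha_1 + \alpha_2$ and the third of the displayed equalities in the lemma holds automatically from Lemma~\ref{calcs}(c). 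The first equality is equivalent to showing $\alpha_2 \frac{d\beta}{\beta} = 0$ in $H^2(F)$, and the plan is to arrange this by a suitable choice of $v$ (equivalently, $u$).

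Expanding,
$$\alpha_2 \frac{d\beta}{\beta} \;=\; \alpha \frac{d\beta}{\beta} + \beta u \frac{d\beta}{\beta} \;=\; \alpha \frac{d\beta}{\beta} + u\, d\beta,$$
and the Leibniz identity $d(u\beta) = u\, d\beta + \beta\, du$ yields $u\, d\beta \equiv -\beta\, du \pmod{d\Omega^0(F)}$. So the required vanishing becomes the identity $\alpha \frac{d\beta}{\beta} = \beta\, du$ in $H^2(F)$, which via the Kato correspondence with the $p$-torsion of the Brauer group is the Brauer equivalence $[\alpha,\beta)_{p,F} \sim [\beta u,u)_{p,F}$. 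The remaining flexibility in the choice of $v \in F_\alpha^\times$ is what I would use to realize this equivalence, together with repeated use of Lemma~\ref{calcs}(a) to absorb boundary terms $d(\cdot) \in d\Omega^0(F)$ into the first-slot additive structure.

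The hard part is thus the explicit selection of $u$: one has to produce a norm $u$ from $F_\alpha/F$ whose logarithmic differential, multiplied by $\beta$, represents the prescribed class $\alpha \frac{d\beta}{\beta}$ in $H^2(F)$. Once $u$ (and hence $\alpha_2 = \alpha + \beta u$, $\alpha_1 = -\beta u$) is singled out, both displayed equalities in the lemma follow by routine symbol manipulations using parts (a), (b) and (c) of Lemma~\ref{calcs}; the norm condition on $u$ keeps the applications of Lemma~\ref{calcs}(b)--(c) legal, and the boundary identities $\beta\, d\beta \in d\Omega^0(F)$, $u^p \, d\beta \in d\Omega^0(F)$ handle the various correction terms that appear along the way.
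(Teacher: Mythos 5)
There is a genuine gap. Your setup is correct as far as it goes: taking $u=N_{F_\alpha/F}(v)$ for some $v\in F_\alpha^\times$, applying \cref{calcs}$(c)$ to get $\alpha\tfrac{d\beta}{\beta}=(\alpha+\beta u)\tfrac{d(\beta u)}{\beta u}$, and setting $\alpha_2=\alpha+\beta u$, $\alpha_1=-\beta u$ correctly reduces the lemma to producing a norm $u$ from $F_\alpha$ for which $(\alpha+\beta u)\tfrac{d\beta}{\beta}=0$ in $H^2(F)$, equivalently $\alpha_1\tfrac{d\beta}{\beta}=\alpha\tfrac{d\beta}{\beta}$. But at exactly that point you stop: you label the selection of $v$ ``the hard part'' and describe what you \emph{would} do, without exhibiting any candidate or verifying the required vanishing. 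Since that construction is the entire content of the lemma, the argument as written proves nothing beyond the (easy) third displayed equality. The detour through $\alpha\tfrac{d\beta}{\beta}=\beta\,du$ and the Brauer equivalence $[\alpha,\beta)_{p,F}\sim[\beta u,u)_{p,F}$ is a correct reformulation but makes no progress toward solving it.

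For comparison, the paper's choice is completely explicit and compatible with your parametrization: set $t=\frac{\alpha\beta-\alpha}{\beta}$ and $u=-(t^p-t+\alpha)$ (with the degenerate cases $\alpha\in\wp(F)$ and $t=0$ handled trivially). This $u$ is a norm from $F_\alpha$, being $(-1)^p$ times the value of $T^p-T-\alpha$ at $t$, i.e.\ up to sign the norm of $t-x$ where $x$ is the Artin--Schreier generator. The reason this particular $t$ works is that then $\alpha_1=-\beta u=\alpha+\beta t^p$, and $t^p=N_{F_\alpha/F}(t)$ is a norm that is also a $p$-th power; so a second application of \cref{calcs}$(c)$, this time with the norm $t^p$, gives $\alpha\tfrac{d\beta}{\beta}=(\alpha+\beta t^p)\tfrac{d(\beta t^p)}{\beta t^p}=\alpha_1\tfrac{d\beta}{\beta}$ because $d(t^p)=0$ leaves the logarithmic slot unchanged. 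That single observation --- choosing $t$ so that $-\beta u$ can be rewritten as $\alpha$ plus $\beta$ times a norm lying in $F^p$ --- is the missing step, and without it (or an equivalent device) your plan does not close.
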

\begin{proof}
If $\alpha\in \wp(F)$ then the result is trivial. Otherwise let $t= \frac{\alpha\beta-\alpha}{\beta}$, $\alpha_1 =\alpha+ \beta t^p$ and $\alpha_2= \alpha - \beta(t^p -t+\alpha)$. Then $\alpha=\alpha_1+\alpha_2$.
If $t=0$ then $\beta=1$ and again the result is trivial.
 If $t\neq0$ then  both $t^p$ and $u=-(t^p -t+\alpha)$ 	are norms of elements in the field $F_a$ (using $-1=(-1)^p$). Hence applying  \cref{calcs}, $(c)$ gives the result.
\end{proof}

\begin{thm}\label{h4=0}
Let $F$ be a field of characteristic $p$. If every Albert $p$-form over $F$ has a non-trivial zero then $H^4(F)=0$.
\end{thm}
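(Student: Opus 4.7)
The first move is to translate the hypothesis. By \Rref{Albertlinkage}(1), the assumption that every Albert $p$-form over $F$ has a non-trivial zero implies that every pair of cyclic degree-$p$ $F$-algebras is separably linked. Under the isomorphism $\Brp[p] F \cong H^2(F)$ sending $[\alpha,\beta)_{p,F}\mapsto \alpha\frac{d\beta}{\beta}$ (recorded right after \Tref{kato}), separable linkage of $[\alpha_1,\beta_1)_{p,F}$ and $[\alpha_2,\beta_2)_{p,F}$ via a common cyclic separable subfield $F_\mu$ gives $\alpha_i\frac{d\beta_i}{\beta_i} = \mu\frac{d\beta_i'}{\beta_i'}$ in $H^2(F)$, so their sum equals $\mu\frac{d(\beta_1'\beta_2')}{\beta_1'\beta_2'}$. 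Hence the hypothesis implies: the sum of any two symbols in $H^2(F)$ is again a single symbol.

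Next, take an arbitrary generator $\omega = b\frac{da_1}{a_1}\wg\frac{da_2}{a_2}\wg\frac{da_3}{a_3}$ of $H^4(F)$ and aim to show $\omega = 0$. The key observation is that for all $c_2,c_3\in F$ the identity
$$\omega = \Big(b\frac{da_1}{a_1} + c_2\frac{da_2}{a_2} + c_3\frac{da_3}{a_3}\Big)\wg\frac{da_2}{a_2}\wg\frac{da_3}{a_3}$$
holds already in $\Omega^3(F)$, because the two added terms vanish by antisymmetry of the wedge product ($\frac{da_i}{a_i}\wg\frac{da_i}{a_i}=0$). Moreover, any element of $dF + \wp(\Omega^1(F))$ wedged with the logarithmic form $\frac{da_2}{a_2}\wg\frac{da_3}{a_3}$ lands in $d\Omega^2(F) + \wp(\Omega^3(F))$, so the class of $\omega$ in $H^4(F)$ depends only on the class of $\sigma_{c_2,c_3} := b\frac{da_1}{a_1}+c_2\frac{da_2}{a_2}+c_3\frac{da_3}{a_3}$ in $H^2(F)$. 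Thus it suffices to find $c_2,c_3\in F$ with $\sigma_{c_2,c_3}=0$ in $H^2(F)$.

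The theorem therefore reduces to the following main claim: under our hypothesis, for every $b\in F$ and $a_1,a_2,a_3\in F^\times$ there exist $c_2,c_3\in F$ such that
$$b\frac{da_1}{a_1} = -c_2\frac{da_2}{a_2} - c_3\frac{da_3}{a_3}\quad\text{in } H^2(F),$$
equivalently, the cyclic algebra $[b,a_1)_{p,F}$ is Brauer-equivalent to $[-c_2,a_2)_{p,F}\otimes[-c_3,a_3)_{p,F}$. This is the main technical obstacle. To establish it, I would first apply \Lref{new} to obtain a decomposition $b=b_1+b_2$ together with $u\in F^\times$ giving two symbol-representations $b\frac{da_1}{a_1} = b_1\frac{da_1}{a_1} = b_2\frac{d(a_1 u)}{a_1 u}$ in $H^2(F)$. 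I would then apply the hypothesis to a carefully chosen auxiliary Albert $p$-form---most plausibly of the shape $A(b_1,b_2,a_2,a_3)$, whose isotropy encodes the separable linkage of $[b_1,a_2)_{p,F}$ with $[b_2,a_3)_{p,F}$---and translate the resulting non-trivial norm relation through the differential-form manipulations of \Lref{calcs} to extract the desired $c_2,c_3$. The crux of the argument is selecting the correct auxiliary Albert form and executing this translation; once the decomposition of $b\frac{da_1}{a_1}$ into slot-$a_2$ and slot-$a_3$ symbols is in hand, the preceding reduction immediately gives $\omega=0$ in $H^4(F)$.
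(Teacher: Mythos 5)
Your setup gets several ingredients right --- the decomposition $b=b_1+b_2$ via \Lref{new}, the choice of the auxiliary Albert form $A(b_1,b_2,a_2,a_3)$ (which is exactly the paper's choice), and the correct observation that wedging a relation in $H^2(F)$ with $\frac{da_2}{a_2}\wg\frac{da_3}{a_3}$ yields a relation in $H^4(F)$ --- but the reduction you build on them opens a gap rather than closing one. You reduce the theorem to the claim that for all $b,a_1,a_2,a_3$ there exist $c_2,c_3$ with $b\frac{da_1}{a_1}+c_2\frac{da_2}{a_2}+c_3\frac{da_3}{a_3}=0$ in $H^2(F)$, i.e.\ that every symbol lies in the subgroup $\{[c_2,a_2)_{p,F}\}+\{[c_3,a_3)_{p,F}\}$ of $\Brp[p] F$ for \emph{every} choice of $a_2,a_3$. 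This is far stronger than the vanishing of $\omega$ and is not what the hypothesis delivers: for $a_2=a_3=a$ it would force every quaternion algebra over a linked field of characteristic $2$ to contain $F(\sqrt{a})$, which fails over $\mathbb{F}_2((\alpha))((\beta))$ (harmless there only because $\omega=0$ trivially in that degenerate case, but it shows the claim as stated is false and that any repair must track how $a_1,a_2,a_3$ interact). Moreover you explicitly leave the proof of this claim as a sketch (``the main technical obstacle'', ``the crux of the argument''), so the essential content of the theorem is deferred, not proved.

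The actual argument cannot be run with the slots $a_2,a_3$ held fixed. The paper works directly in $H^4(F)$: the point of \Lref{new} is that the single class $\alpha\frac{d\beta}{\beta}$ admits two presentations with Artin--Schreier coefficients $\alpha_1$ and $\alpha_2$, which by \Lref{calcs}$(b)$ lets one multiply the \emph{second} slot by the norm $r=N_{F_{\alpha_1}/F}(y)$ and the \emph{third} slot by the norm $s=N_{F_{\alpha_2}/F}(z)$ coming from the zero of $A(\alpha_1,\alpha_2,\gamma,\delta)$. After these slot changes $\gamma\mapsto\gamma r$, $\delta\mapsto\delta s$ (and a preliminary rescaling so that the $F_\alpha$-coordinate contributes $\alpha$ when $x\neq0$), the Albert relation says the coefficient $\alpha$ equals minus a sum of the new slot entries, and \Lref{calcs}$(a)$ --- the coefficient may be shifted by any slot entry --- kills $\omega$. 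The vanishing is thus a genuinely three-slot phenomenon (coefficient equal to a combination of slot entries makes the $3$-form exact) and is \emph{not} the image of a vanishing in $H^2(F)$: in the paper's final expression the class $\alpha\frac{d\beta}{\beta}$ is still nonzero in $H^2(F)$. Your reduction discards exactly the move (multiplying $a_2$ and $a_3$ by norms) that makes the proof work.
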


\begin{proof} Let $\alpha\in F\setminus \wp(F)$,  $\beta,\gamma, \delta\in F^\times$ and 
 $\omega=\alpha \frac{d \beta}{\beta} \wedge \frac{d \gamma}{\gamma} \wedge \frac{d \delta}{\delta}\in H^4(F)$.
By  \cref{new} there exist $\alpha_1,\alpha_2\in F$ and $u\in F^\times$ such that  $\alpha= \alpha_1+\alpha_2$ and 
\begin{eqnarray}\label{alphas}\alpha \frac{d \beta}{\beta}=  \alpha_1 \frac{d \beta}{\beta} =\alpha_2 \frac{d \beta u}{\beta  u} \in H^2(F)\,.\end{eqnarray}
If $\alpha_2$ or $\alpha_1\in \wp(F)$, we have that $\omega=0\in  H^4(F)$. Therefore we may assume that $F_{\alpha_2}/F$ and $F_{\alpha_1}/F$ are non-trivial extensions. In particular the respective norm forms have no non-trivial zeros. 

By the hypothesis,  the Albert $p$-form 
$A(\alpha_1,\alpha_2,\gamma,\delta)$ has a non-trivial zero. That is, there exist $x\in F_\alpha$, $y\in F_{\alpha_1}$  and $z\in F_{\alpha_2}$  not all zero such that  
\begin{eqnarray}\label{2nd}N_{F_\alpha/F}(x)+\gamma N_{F_{\alpha_1}/F}(y)  + \delta N_{F_{\alpha_2}/F}(z)=0\,.\end{eqnarray}
If $x=0$, which holds if and only if $N_{F_a/F}(x)=0$, then 
$$\gamma N_{F_{\alpha_1}/F}(y) = -\delta N_{F_{\alpha_2}/F}(z) = \delta N_{F_{\alpha_2}/F}(-z)\neq 0 \,.$$ 
Fix $r=  N_{F_{\alpha_1}/F}(y)$ and $s= N_{F_{\alpha_2}/F}(-z)$. Then by (\ref{alphas}) and \cref{calcs}, $(c)$ we have 
\begin{eqnarray*}\omega &= & \alpha_1 \frac{d \beta}{\beta} \wedge \frac{d \gamma}{\gamma} \wedge \frac{d \delta}{\delta}
= \alpha_1 \frac{d \beta}{\beta} \wedge \frac{d \gamma r}{\gamma r} \wedge \frac{d \delta}{\delta}\\
&=&\alpha_2  \frac{d \beta u }{\beta u} \wedge \frac{d \gamma r}{\gamma r} \wedge \frac{d \delta}{\delta} = \alpha_2  \frac{d \beta u }{\beta u} \wedge \frac{d \gamma r}{\gamma r} \wedge \frac{d \delta s}{\delta s} = 0 \,,
\end{eqnarray*}
where the last equality follows from $\gamma r =\delta s$. 

Assume now that $x\neq 0$, and hence $N_{F_a/F}(x)\neq 0$. Let $\eta\in F_a$  be such that $N_{F_a/F}(\eta)=\alpha$. Fix $q= N_{F_a/F}(\eta x^{-1})$. Multiplying (\ref{2nd}) by $q$ and using the multiplicativity of the norm form gives 
$ \alpha +q\gamma N_{F_{\alpha_1}/F}(y)  + q\delta N_{F_{\alpha_2}/F}(z)=0 \,.$
As $q$ is also a norm of an element in $F_\alpha$, \cref{calcs}, $(c)$ gives
$$\omega = \alpha \frac{d \beta}{\beta} \wedge \frac{d q \gamma}{q\gamma} \wedge \frac{d q\delta}{q\delta}\,.$$
Hence we may assume that $\omega =\alpha \frac{d \beta}{\beta} \wedge \frac{d \gamma}{\gamma} \wedge \frac{d \delta}{\delta}$  and that $\alpha +\gamma N_{F_{\alpha_1}/F}(y)  + \delta N_{F_{\alpha_2}/F}(z)=0$. 
Fix 
$$  r= \left\{ \begin{array}{cc} N_{F_{\alpha_1}}(y) & \textrm{ if  }  y\neq0 \\ 
1  & \textrm{ otherwise}  \end{array} \right.
\quad \textrm{
and} 
\quad  s= \left\{ \begin{array}{cc} N_{F_{\alpha_2}}(z) & \textrm{ if  }  z\neq0 \\ 
1  & \textrm{ otherwise}  \end{array} \right.\,.
$$
Then by (\ref{alphas}) and \cref{calcs}, $(c)$ we have for some $u\in F^\times$  
\begin{eqnarray*}\omega &= & \alpha_1 \frac{d \beta}{\beta} \wedge \frac{d \gamma}{\gamma} \wedge \frac{d \delta}{\delta}
= \alpha_1 \frac{d \beta}{\beta} \wedge \frac{d \gamma r}{\gamma r} \wedge \frac{d \delta}{\delta}
=\alpha_2  \frac{d \beta u }{\beta u} \wedge \frac{d \gamma r}{\gamma r} \wedge \frac{d \delta}{\delta}
\\ & = &\alpha_2  \frac{d \beta u }{\beta u} \wedge \frac{d \gamma r}{\gamma r} \wedge \frac{d \delta s}{\delta s} = \alpha  \frac{d \beta  }{\beta } \wedge \frac{d \gamma r}{\gamma r} \wedge \frac{d \delta s}{\delta s}
 \\&=& (\alpha +\gamma N_{F_{\alpha_1}/F}(y)  + \delta N_{F_{\alpha_2}/F}(z) )  \frac{d \beta  }{\beta } \wedge \frac{d \gamma r}{\gamma r} \wedge \frac{d \delta s}{\delta s} = 0 \,.
\end{eqnarray*}\end{proof}

\begin{thm}\label{pure}
Let $F$ be a field of characteristic $p$.
Suppose that for all $\alpha\in F$ and $\gamma,\delta\in F^\times$, the pure part of the Albert $p$-form $A(\alpha,\alpha,\gamma,\delta)$ has a non-trivial zero. Then $H^3(F)=0$.
\end{thm}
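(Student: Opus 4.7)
The plan is to parallel the proof of \Tref{h4=0}, but with the simplification that no decomposition $\alpha = \alpha_1 + \alpha_2$ is needed: the hypothesis on the pure part of $A(\alpha,\alpha,\gamma,\delta)$ already supplies a relation $x^p + \gamma N(y) + \delta N(z) = 0$ with $x\in F$, which matches the structure of a $3$-form $\alpha\,\frac{d\gamma}{\gamma}\wedge \frac{d\delta}{\delta}$ having only one coefficient to eliminate. So it suffices to show every generator $\omega = \alpha\,\frac{d\gamma}{\gamma}\wedge \frac{d\delta}{\delta}$ of $H^3(F)$ vanishes, for $\alpha \in F$ and $\gamma,\delta\in F^\times$. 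If $\alpha \in \wp(F)$ then $\omega = 0$ by the definition of $H^3(F)$, so I may assume $\alpha \notin \wp(F)$, whence $F_\alpha/F$ is a cyclic field extension of degree $p$. The hypothesis then supplies $x\in F$ and $y,z\in F_\alpha$, not all zero, satisfying
$$x^p + \gamma N_{F_\alpha/F}(y) + \delta N_{F_\alpha/F}(z) = 0.$$

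If $x = 0$, then $\gamma N(y) = -\delta N(z)$ forces both $y$ and $z$ to be nonzero (as $F_\alpha$ is a field). Setting $r = N_{F_\alpha/F}(y)$ and $s = N_{F_\alpha/F}(-z)$ gives $\gamma r = \delta s$, and two applications of \cref{calcs}, $(b)$ rewrite
$$\omega = \alpha\,\frac{d(\gamma r)}{\gamma r}\wedge \frac{d(\delta s)}{\delta s},$$
which vanishes as the wedge of a logarithmic $1$-form with itself.

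If $x \neq 0$, the crucial observation is that $\alpha$ is itself a norm from $F_\alpha$: the generator $\tau \in F_\alpha$ with $\tau^p - \tau = \alpha$ satisfies $N_{F_\alpha/F}(\tau) = \prod_{i=0}^{p-1}(\tau + i) = \tau^p - \tau = \alpha$. Setting $q = N_{F_\alpha/F}(\tau x^{-1}) = \alpha/x^p$ and multiplying the relation by $q$ yields
$$\alpha + (q\gamma)\,N_{F_\alpha/F}(y) + (q\delta)\,N_{F_\alpha/F}(z) = 0,$$
while \cref{calcs}, $(b)$ (applied twice, since $q$ is a norm) gives $\omega = \alpha\,\frac{d(q\gamma)}{q\gamma}\wedge \frac{d(q\delta)}{q\delta}$. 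Two applications of \cref{calcs}, $(c)$, each absorbing one of the norm terms into the coefficient, then produce a $3$-form whose coefficient is the left-hand side of the displayed relation, namely zero.

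The main obstacle I anticipate is the sub-cases of $x\neq 0$ in which $y = 0$ or $z = 0$, since \cref{calcs}, $(c)$ most naturally absorbs a nonzero norm. I will handle these by taking the absorbed factor to be $1 = N_{F_\alpha/F}(1)$ when the corresponding vector vanishes; the coefficient then collapses to $q\gamma$ or $q\delta$ rather than to zero, and \cref{calcs}, $(a)$ permits one to subtract that residue from the coefficient and finish. The case $y = z = 0$ cannot arise under the assumption $x\neq 0$, as it would force $x^p = 0$.
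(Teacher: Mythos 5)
Your argument is correct in substance, but it takes a noticeably different (and longer) route than the paper. The paper's proof makes no case distinction on $x$ and never normalises the relation: it sets $r=N_{F_\alpha/F}(y)$ (or $1$ if $y=0$) and $s=N_{F_\alpha/F}(-z)$ (or $1$ if $z=0$), applies \cref{calcs}~$(b)$ to get $\omega=\alpha\,\frac{d(\gamma r)}{\gamma r}\wedge\frac{d(\delta s)}{\delta s}$, and then observes that $\gamma r-\delta s=(-x)^p\in F^p$ (or that one of $\gamma r,\delta s$ lies in $F^p$ in the degenerate sub-cases), so $d(\gamma r)=d(\delta s)$ and the wedge vanishes. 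In other words, the paper kills the $2$-form factor, whereas you kill the coefficient: your $x\neq 0$ branch first verifies $\alpha=N_{F_\alpha/F}(\tau)$, rescales by $q=\alpha/x^p$ so the relation reads $\alpha+q\gamma N_{F_\alpha/F}(y)+q\delta N_{F_\alpha/F}(z)=0$, and then absorbs the norm terms into the coefficient. Both mechanisms work, but the paper's version makes your entire $x\neq 0$ analysis (including the norm computation for $\tau$ and the $y=0$ or $z=0$ sub-cases) unnecessary; the normalisation trick you import is needed in \cref{h4=0} precisely because there the three slots carry different coefficients $\alpha,\alpha_1,\alpha_2$, a complication absent here. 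One small imprecision in your write-up: you cannot literally apply \cref{calcs}~$(c)$ twice in succession, since after the first application the coefficient becomes $b'=\alpha+q\gamma N_{F_\alpha/F}(y)$ and a second application of $(c)$ would require $N_{F_\alpha/F}(z)$ to be a norm from $F_{b'}$ rather than from $F_\alpha$. The fix is immediate --- apply $(b)$ twice (both norms are from $F_\alpha$ and $(b)$ leaves the coefficient $\alpha$ untouched) and then $(a)$ twice to add the two slot entries to the coefficient --- but as written the step does not quite match the hypotheses of the lemma you cite.
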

\begin{proof} Let $\alpha,\gamma,\delta\in F^\times$ and 
 $\omega=\alpha \frac{d \gamma}{\gamma} \wedge \frac{d \delta}{\delta}\in H^3(F)$. We may assume that $\alpha\notin \wp(F)$ and hence that the field extension $F_\alpha/F$ is non-trivial.
Consider the Albert $p$-form $A(\alpha,\alpha, \gamma, \delta)$. 
By the hypothesis, there exist $x\in F$ and $y,z\in F_\alpha$ not all zero such that
$$x^p + \gamma N_{F_\alpha/F}(y)+ \delta N_{F_\alpha/F}(z)=0\,.$$
Clearly at least one of $y$ or $z$ must be non-zero. Fix 
$$  r= \left\{ \begin{array}{cc} N_{F_\alpha/F}(y) & \textrm{ if  } y\neq0 \\ 
1  & \textrm{ otherwise}  \end{array} \right.
\quad 
\textrm{and }
\quad  s= \left\{ \begin{array}{cc} N_{F_\alpha/F}(-z) & \textrm{ if  }  z\neq0 \\ 
1  & \textrm{ otherwise}  \end{array} \right.\,.
$$
Then by \cref{calcs}, $(b)$ we have  
$$\omega=\alpha \frac{d\gamma r}{\gamma r} \wedge \frac{d\delta s}{\delta s}\,.$$ As $\gamma r$ and $\delta s$ differ by an element in $F^p$ we have  
$\frac{d \gamma r}{\gamma r} \wedge \frac{d \delta s}{\delta s}=0$
and hence $\omega=0$.
\end{proof}

\begin{rem}
Over fields of characteristic $2$, the pure parts of Albert $2$-forms
of the type  $A(\alpha,\alpha,\gamma,\delta)$ 
  correspond  (up to scaling) to $5$-dimensional   Pfister neighbours (see \cite[(23.10)]{EKM})
and hence it is clear that  if all these forms are isotropic, then $I_q^3F$ is trivial.
\end{rem}

By  Remark \ref{Albertlinkage} and  \cref{h4=0} have that $H^4(F)=0$ for a linked field $F$ of characteristic $2$.

\begin{ques}\label{h4=0?}
Let $F$ be a field of characteristic $p>2$. 
If every two cyclic algebras of degree $p$ over $F$ are separably linked, is
 $H^4(F)=0$? If every two such algebras are inseparably linked, is $H^3(F)=0$?
\end{ques}

\section{Linked Fields of characteristic $2$}

The following result was shown in {\cite{Faivre:thesis}}.

\begin{thm}[{\cite[Theorem 3.3.10]{Faivre:thesis}}]\label{faivre}
If $F$ is a linked field with $\operatorname{char}(F)=2$ and $I_q^4 F=0$ then the possible values its $u$-invariant can take are $0,2,4$ and $8$.
\end{thm}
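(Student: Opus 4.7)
The argument follows the classical Elman--Lam strategy \cite{ElmanLam:1973}, adapted to characteristic $2$. Since $\chr(F)=2$, every nonsingular quadratic form has even dimension, so $u(F)$ is a non-negative even integer. It thus suffices to rule out $u(F)=6$ and to prove the upper bound $u(F)\leq 8$.

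For the exclusion of $u(F)=6$: suppose for contradiction $u(F)=6$ and let $\varphi$ be an anisotropic nonsingular $6$-dimensional form. By \cref{Albertlinkage}, linkage forces $c:=\triangle(\varphi)\neq 0$ in $F/\wp(F)$, so $\psi:=\varphi\perp[1,c]$ lies in $I_q^2F$ and has dimension $8$. From $u(F)<8$ one deduces that every $3$-fold quadratic Pfister form is hyperbolic (being $8$-dimensional and isotropic), and since $I_q^3F$ is generated by scalar multiples of such forms, $I_q^3F=0$. Kato's isomorphism then identifies $I_q^2F$ with $H^2(F)\cong {}_2\Br(F)$, and the linkage hypothesis ensures that every class in ${}_2\Br(F)$ is the class of a single quaternion algebra; hence $\psi$ is Witt equivalent to a single $2$-fold quadratic Pfister form $\pi$. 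We conclude $\varphi\equiv\pi\perp[1,c]$ in $W_qF$. But the $6$-dimensional form $\pi\perp[1,c]$ is itself isotropic in characteristic $2$: both $\pi$ and $[1,c]$ represent $1\in F$, and concatenating the two representations yields a nontrivial vector on which $\pi\perp[1,c]$ evaluates to $1+1=0$. Hence the anisotropic part of $\pi\perp[1,c]$ has dimension strictly less than $6$, contradicting the anisotropy of $\varphi$ in dimension $6$. Therefore $u(F)\neq 6$, and since $u(F)$ is even, either $u(F)\leq 4$ or $u(F)\geq 8$.

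For the upper bound $u(F)\leq 8$: the core structural input is that in a linked field of characteristic $2$ satisfying $I_q^4F=0$, linkage of quaternion algebras propagates to higher quadratic Pfister levels, so every class in $I_q^3F$ is represented by a single $3$-fold quadratic Pfister form, and the $2$-fold and $3$-fold representatives can be chosen $2$-linked, i.e.\ sharing a common bilinear $2$-fold Pfister factor. Granted this, suppose $\varphi$ is anisotropic of dimension $\geq 10$. Killing the Arf invariant by adjoining a suitable $[1,c]$ (which increases the dimension by $2$ and lowers the Witt index by at most $1$, since $[1,c]$ is anisotropic of dimension $2$), we may assume $\varphi\in I_q^2F$ with anisotropic dimension $\geq 10$. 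The Clifford invariant of $\varphi$ corresponds, via linkage, to a single quaternion algebra with $2$-fold quadratic Pfister norm form $\pi$, so $\varphi\perp\pi\in I_q^3F$ and is Witt equivalent to a single $3$-fold quadratic Pfister form $\Pi$, which may be chosen $2$-linked to $\pi$. Writing $\Pi=\pi\perp\langle\gamma\rangle\pi$ and using that $\pi\perp\pi$ is Witt trivial in characteristic $2$ (since $\langle 1,1\rangle$ is a metabolic bilinear form), we obtain $\varphi\equiv\langle\gamma\rangle\pi$ in $W_qF$, a class of anisotropic dimension at most $4$, contradicting the anisotropy of $\varphi$ in dimension $\geq 10$.

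The main obstacle is the propagation of linkage to the $3$-fold Pfister level --- establishing that in a linked field of characteristic $2$ with $I_q^4F=0$, every class in $I_q^3F$ admits a single $3$-fold quadratic Pfister representative that can moreover be chosen $2$-linked to a prescribed $2$-fold quadratic Pfister form. This is the quadratic-form counterpart of the cohomological collapse established in \cref{h4=0} and is the technical heart of Faivre's original proof.
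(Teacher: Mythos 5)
The paper offers no proof of this statement to compare against: it is imported wholesale from Faivre's thesis \cite[Theorem 3.3.10]{Faivre:thesis}. Judged on its own terms, your exclusion of $u(F)=6$ is complete and correct. Linkage makes every Albert form isotropic, so an anisotropic nonsingular $6$-dimensional form has nontrivial Arf invariant $c$; under $u(F)=6$ every $3$-fold quadratic Pfister form is hyperbolic, so $I_q^3F=0$, and Kato's isomorphism together with Albert's theorem that ${}_2\Br(F)$ is generated by quaternion classes (which form a subgroup precisely because $F$ is linked) identifies $\varphi\perp[1,c]$ with a single $2$-fold Pfister form $\pi$; finally $\pi\perp[1,c]$ is visibly isotropic, contradicting $\dim\varphi_{\ani}=6$. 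This half uses neither $I_q^4F=0$ nor anything from Faivre, which is a genuinely nice observation. (A small inaccuracy: adjoining an anisotropic binary form to an anisotropic form can lower the anisotropic dimension by $2$, not $1$ --- take $\varphi=\tau\perp\rho$ and add $\tau$ --- but this does not affect either of your contradictions.)

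The bound $u(F)\leq 8$, however, is not proved. You reduce it to the assertion that every class in $I_q^3F$ is represented by a single $3$-fold quadratic Pfister form which can moreover be chosen to admit a prescribed $2$-fold Pfister form $\pi$ as a factor, and you explicitly defer this to ``the technical heart of Faivre's original proof.'' That assertion is not a routine consequence of the hypotheses on the table: granting it, the remaining computation is three lines, so essentially all of the content of the theorem has been displaced into the unproven claim. It is telling that the hypothesis $I_q^4F=0$ appears nowhere in your argument except inside that claim. Concretely, two things are missing: (i) that linkage of quaternion algebras forces every class of $H^3(F)\cong I_q^3F/I_q^4F$ to be a single symbol, so that $I_q^4F=0$ upgrades this to every class of $I_q^3F$ being a single $3$-fold Pfister form; and (ii) the common-slot statement allowing that Pfister form to be chosen divisible by the $2$-fold form $\pi$ arising from the Clifford invariant. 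Neither is established, so what you have is an accurate outline of the Elman--Lam strategy in characteristic $2$, with a correct and self-contained treatment of the value $6$, but not a proof of the theorem.
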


Since a field of characteristic $2$ being linked is equivalent to every Albert form over $F$ being isotropic by Remark \ref{Albertlinkage}, combining \cref{faivre}, \cref{h4=0} and \cref{kato} gives the following result:

\begin{cor}\label{linkeduinv}
If $F$ is a linked field with $\operatorname{char}(F)=2$ then the possible values its $u$-invariant can take are $0,2,4$ and $8$.
\end{cor}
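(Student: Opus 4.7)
The plan is to chain together Remark~\ref{Albertlinkage}(1), \cref{h4=0}, Kato's \cref{kato}, the Arason--Pfister Hauptsatz for nonsingular quadratic forms in characteristic~$2$, and Faivre's \cref{faivre}. Since $\operatorname{char}(F)=2$, Remark~\ref{Albertlinkage}(1) identifies the linkage of $F$ with the assertion that every Albert $2$-form over $F$ is isotropic, so an immediate application of \cref{h4=0} yields $H^4(F)=0$.

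The substantive step is then the translation from $H^4(F)=0$ into $I_q^4 F=0$, which is the hypothesis required by \cref{faivre}. Kato's isomorphism gives $H^4(F)\cong I_q^4 F/I_q^5 F$, so I would first infer $I_q^4 F=I_q^5 F$. Now pick any $4$-fold quadratic Pfister form $\pi$; the previous equality places $\pi$ in $I_q^5 F$, and since $\dim\pi=16<32$, the Hauptsatz forces $\pi$ to be hyperbolic. Because $I_q^4 F$ is generated by scalar multiples of $4$-fold Pfister forms, and scalar multiples of hyperbolic forms remain hyperbolic, it follows that $I_q^4 F=0$.

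With $I_q^4 F=0$ in hand and $F$ linked by hypothesis, \cref{faivre} applies and delivers $u(F)\in\{0,2,4,8\}$, completing the argument. I expect the only bit needing care to be the Hauptsatz-plus-Kato bridge between $H^4$ and $I_q^4$, in particular checking that the Hauptsatz is invoked in its correct characteristic-$2$ formulation for nonsingular quadratic forms; everything else is a clean assembly of the cited ingredients.
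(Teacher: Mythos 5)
Your argument is correct and follows essentially the same route as the paper, which likewise deduces the corollary by combining Remark~\ref{Albertlinkage}, \cref{h4=0}, \cref{kato} and \cref{faivre}. The only difference is that you spell out the step the paper leaves implicit, namely passing from $I_q^4F=I_q^5F$ to $I_q^4F=0$ via the characteristic-$2$ Hauptsatz, which is exactly the right way to close that gap.
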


\cref{linkeduinv}
follows more directly from \cref{faivre} if one can show that every $4$-fold Pfister form contains an Albert form as a subform, as then clearly $I_q^4(F)=0$ if the field is linked. 
As this result is also of independent interest, we give a proof below. The computations are similar to those used in \cref{h4=0}. 
We use the following well-known isometry. This can be derived  from,  for example, \cite[(2.4) and (2.6)]{dolphin:conic}. It can also be directly derived from 
\cref{calcs} and \cref{kato}.

\begin{lem}\label{iso} Assume $\operatorname{char}(F)=2$.
Let  $b\in F^\times$, $a\in F$, $x,y\in F$ not both zero and $\beta=x^2+xy+ay^2$. Then we have 
$ \langle \langle b,a]]\simeq \langle \langle b\beta, a+b\beta]]\,.$
\end{lem}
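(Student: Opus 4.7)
The plan is to derive the isometry from \cref{calcs}(c) and \cref{kato}, translating an identity in $H^2(F)$ into an isometry of $2$-fold quadratic Pfister forms. I will focus on the main case where $\beta \neq 0$ and $a \notin \wp(F)$; the degenerate cases collapse to both sides being hyperbolic and can be dealt with directly.

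First, I will observe that $\beta = x^2 + xy + a y^2 = N_{F_a/F}(x + y\bar{T})$, where $\bar T$ denotes the class of $T$ in $F_a = F[T]/(T^2+T-a)$. Under the standing assumptions $F_a/F$ is a proper cyclic extension and $\beta$ lies in its group of nonzero norms. This puts us exactly in the setting of \cref{calcs}(c) with $n=1$: taking the ``$b$'' and ``$a_1$'' of the cited lemma to be our $a$ and $b$ respectively, and our $\beta$ as the lemma's ``$\beta$'', the conclusion reads
\begin{equation*}
a\,\frac{db}{b} \;=\; (a + b\beta)\,\frac{d(b\beta)}{b\beta} \quad \text{in } H^2(F).
\end{equation*}

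Next, applying Kato's isomorphism $\alpha_{1,F}\colon H^2(F) \to I_q^2 F/I_q^3 F$ from \cref{kato} to both sides, the left-hand side will map to $\langle\langle b, a]]$ and the right-hand side to $\langle\langle b\beta, a+b\beta]]$ modulo $I_q^3 F$, so we obtain
\begin{equation*}
\langle\langle b, a]] \;\equiv\; \langle\langle b\beta,\, a+b\beta]] \pmod{I_q^3 F}.
\end{equation*}

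The final and most delicate step is to upgrade this congruence to an actual isometry. The key point I will invoke is that a $2$-fold quadratic Pfister form is determined up to isometry by its class modulo $I_q^3 F$: under the chain $I_q^2 F/I_q^3 F \simeq H^2(F) \simeq \Brp[2](F)$ (the second iso being the one recalled right after \cref{kato}), two such Pfister forms with equal image correspond to the same quaternion $F$-algebra up to isomorphism, and each is the reduced norm form of that algebra up to isometry. This will give $\langle\langle b, a]] \simeq \langle\langle b\beta,\, a+b\beta]]$. The main conceptual obstacle is this last upgrade from ``mod $I_q^3 F$'' to genuine isometry; the rest is bookkeeping with the index conventions of \cref{calcs} and \cref{kato}.
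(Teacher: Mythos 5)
Your proposal is correct and follows exactly the route the paper itself indicates: the paper gives no written-out proof but states that the lemma ``can also be directly derived from \cref{calcs} and \cref{kato}'', which is precisely your argument (identify $\beta$ as $N_{F_a/F}(x+y\bar T)$, apply \cref{calcs}(c) with the roles of $a$ and $b$ swapped, push through Kato's isomorphism, and lift the congruence mod $I_q^3F$ to an isometry using that a $2$-fold quadratic Pfister form is determined by its class there). The degenerate case $a\in\wp(F)$ does need the small observation that $[c,c)_{2,F}$ splits (so the right-hand form is also hyperbolic), but that is routine and your sketch is sound.
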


\begin{lem}\label{suballem} Assume $\operatorname{char}(F)=2$.
Let $\pi$ be a $2$-fold Pfister form over $F$ and $\lambda_i\in F^\times$ for $i=1,2,3$. The the form 
$\rho=\qf{ \lambda_1,\lambda_2,\lambda_3}\otimes \pi
$
contains an Albert subform.
\end{lem}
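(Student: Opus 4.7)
The plan is to exhibit an explicit 6-dimensional nonsingular subform $\varphi$ of $\rho$ with trivial Arf invariant, obtained by combining three different decompositions of $\pi$ provided by \cref{new}.

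Write $\pi = \langle\langle b, a]]$ and apply \cref{new} with $\alpha = a$ and $\beta = b$. This yields $a_1, a_2 \in F$ with $a = a_1 + a_2$ and $u \in F^\times$ satisfying
$$
a\frac{db}{b} \,=\, a_1 \frac{db}{b} \,=\, a_2 \frac{d(bu)}{bu} \quad\text{in } H^2(F).
$$
Through the isomorphism $H^2(F) \cong \Brp[2](F)$ recorded just after \cref{kato}, these equalities translate into isomorphisms of quaternion algebras $[a,b)_{2,F} \cong [a_1, b)_{2,F} \cong [a_2, bu)_{2,F}$. Since a $2$-fold quadratic Pfister form is determined up to isometry by the Brauer class of the associated quaternion algebra, I obtain isometries
$$
\pi \,\cong\, \langle\langle b, a_1]] \,\cong\, \langle\langle bu, a_2]].
$$

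Each such presentation of $\pi$ decomposes it as an orthogonal sum of two $2$-dimensional blocks. Using these, I extract $2$-dimensional subforms from the three $\lambda_i\pi$: take $\lambda_1[1, a_1] \subset \lambda_1\pi$ (via $\pi \cong \langle\langle b, a_1]]$), $\lambda_2[1, a_2] \subset \lambda_2\pi$ (via $\pi \cong \langle\langle bu, a_2]]$), and $\lambda_3[1, a] \subset \lambda_3\pi$ (via the original $\pi = \langle\langle b, a]]$). Because these live in orthogonal summands of $\rho$, their direct sum
$$
\varphi \,=\, \lambda_1[1, a_1] \perp \lambda_2[1, a_2] \perp \lambda_3[1, a]
$$
is a $6$-dimensional nonsingular subform of $\rho$.

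A direct computation gives $\triangle(\lambda_i[1,c]) = c$ for each $2$-dimensional block, so
$\triangle(\varphi) = a_1 + a_2 + a = a + a = 0$ in $F/\wp(F)$,
using $a = a_1 + a_2$ and $\operatorname{char}(F) = 2$. Thus $\varphi$ is an Albert form and the lemma follows. The only non-routine step is the passage from the cohomological equalities produced by \cref{new} to the corresponding isometries of $2$-fold Pfister forms; this relies on the classification of such forms by the Brauer class of the underlying quaternion algebra, and is the one place where one must import a result outside the direct differential-form calculus developed in the paper.
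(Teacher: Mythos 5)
Your proof is correct, and at its core it is the same argument as the paper's: both split $a$ as $a_1+a_2$, realise $[1,a]$, $[1,a_1]$ and $[1,a_2]$ as subforms of the three orthogonal copies $\lambda_i\pi$, and conclude from the Arf invariant computation $a+a_1+a_2=0$. (Indeed the elements $t,\alpha_1,\alpha_2$ written out in the paper's proof of \cref{suballem} are exactly the ones constructed in the proof of \cref{new}, so your reuse of \cref{new} just avoids repeating that computation.) The one genuine divergence is how the containments $[1,a_1],[1,a_2]\subset\pi$ are justified. The paper applies \cref{iso}, an explicit isometry of $2$-fold Pfister forms, so the whole argument stays at the level of quadratic forms and is self-contained modulo the reference given for \cref{iso}. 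You instead pass through $H^2(F)$ and the Brauer group, converting the equalities from \cref{new} into isometries via the fact that a quaternion algebra is determined by its Brauer class and that isomorphic quaternion algebras have isometric norm forms. That step is valid (see e.g.\ \cite[(12.5)]{EKM}), and the paper itself notes that \cref{iso} can be derived from \cref{calcs} and \cref{kato} in essentially this way; the trade-off is that your route imports this classification result from outside the paper, whereas the paper's route via \cref{iso} keeps the argument purely form-theoretic. Your Arf invariant computation $\triangle(\lambda[1,c])=c$ and the assembly of the $6$-dimensional nonsingular subform from the orthogonal summands of $\rho$ are both correct.
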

\begin{proof}
Let $\alpha\in F$ and $\beta\in F^\times$ such that $\pi= \langle \langle \beta,\alpha]]$.
Fix $t= \frac{\alpha+\beta \alpha}{\beta}$, $\alpha_1= \alpha + \beta t^2$ and $\alpha_2= \alpha +\beta(t^2+t +\alpha)$. Then $\alpha=\alpha_1+\alpha_2$. 
%
%
%
%
%
%
%
Using \cref{iso}, we see that the forms $[1,\alpha]$, $[1,\alpha_1]$ and $[1,\alpha_2]$ are all subforms of $\pi$.
Consequently, the form $$\psi=\lambda_1[1,\alpha] \perp \lambda_2 [1,\alpha_1] \perp \lambda_3[1,\alpha_2]$$ is a subform of $\rho$. The Arf invariant of this form is $\alpha+\alpha_1+\alpha_2=0$.
Therefore $\psi$ is an Albert form.
\end{proof}

\begin{cor}\label{subalbert}
Given a field $F$ with $\operatorname{char}(F)=2$, every  $4$-fold Pfister form contains an Albert subform.
\end{cor}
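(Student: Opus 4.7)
The plan is to reduce the statement directly to \cref{suballem} by decomposing the $4$-fold Pfister form in a convenient way. Write an arbitrary $4$-fold Pfister form as $\pi_4 = \langle\langle a_1, a_2, a_3, \beta]]$ for some $a_1, a_2, a_3 \in F^\times$ and $\beta \in F$. I would first expand the leading bilinear part:
$$\pi_4 \;=\; \qf{1, a_1}\otimes \qf{1, a_2}\otimes \langle\langle a_3, \beta]] \;=\; \qf{1, a_1, a_2, a_1 a_2}\otimes \langle\langle a_3, \beta]].$$

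Setting $\pi = \langle\langle a_3, \beta]]$, which is a $2$-fold quadratic Pfister form, I would next observe that $\qf{1, a_1, a_2}$ is a diagonal subform of the bilinear form $\qf{1, a_1, a_2, a_1 a_2}$. Since passing to a subform of the bilinear factor of a tensor product yields a subform of the resulting quadratic form (immediate from the action on basis vectors), the $12$-dimensional quadratic form $\qf{1, a_1, a_2}\otimes \pi$ is a subform of $\pi_4$.

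Finally, I would apply \cref{suballem} to $\pi$ with $(\lambda_1, \lambda_2, \lambda_3) = (1, a_1, a_2)$. This produces an Albert subform inside $\qf{1, a_1, a_2}\otimes \pi$, which is then automatically an Albert subform of $\pi_4$.

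There is no real obstacle here; the argument amounts to choosing the right grouping in the tensor decomposition of $\pi_4$ so that \cref{suballem} applies verbatim. The only point to verify carefully is that the choice of three diagonal entries out of four does indeed cut out a subform after tensoring with $\pi$, but this is a standard and routine check.
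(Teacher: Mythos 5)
Your proposal is correct and is essentially identical to the paper's own proof: the paper writes $\varphi=\langle\langle \delta,\gamma,\beta,\alpha]]$, takes $\pi=\langle\langle\beta,\alpha]]$, notes that $\rho=\pi\perp\delta\pi\perp\gamma\pi=\qf{1,\delta,\gamma}\otimes\pi$ is a subform of $\varphi$, and applies \cref{suballem}. Your careful remark about dropping one diagonal entry of $\qf{1,a_1,a_2,a_1a_2}$ before tensoring is the same (routine) step the paper performs implicitly.
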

\begin{proof}
Let  $\varphi=\langle \langle \delta,\gamma,\beta,\alpha]]$ be a  4-fold Pfister form over $F$ and let $\pi =\langle \langle \beta,\alpha]]$. Then $\rho= \pi\perp \delta\pi\perp \gamma\pi$
is a subform of $\varphi$. The form $\rho$, and hence $\varphi$, contains an Albert subform by \cref{suballem}.
\end{proof}

\begin{thm}\label{newElmanLam}
If $F$ is a linked field with $\operatorname{char}(F)=2$ then $I_q^4 F=0$.
\end{thm}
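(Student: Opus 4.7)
The plan is to invoke \cref{subalbert}, which says that every quadratic $4$-fold Pfister form over $F$ contains an Albert subform. First I would take an arbitrary quadratic $4$-fold Pfister form $\varphi$ over $F$. By \cref{subalbert}, $\varphi$ contains an Albert subform $\psi$. Since $F$ is linked, \cref{Albertlinkage} ensures that every Albert form over $F$ is isotropic, so $\psi$ is isotropic, and therefore so is $\varphi$.

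Next I would appeal to the standard fact recorded in \cref{secbfqf} that a quadratic Pfister form is isotropic if and only if it is hyperbolic. Thus every quadratic $4$-fold Pfister form over $F$ is hyperbolic. Since a scalar multiple of a hyperbolic form is hyperbolic and $I_q^4 F$ is generated by scalar multiples of quadratic $4$-fold Pfister forms, it follows that $I_q^4 F = 0$.

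A word on why I avoid combining \cref{h4=0} with \cref{kato} directly: linkedness of $F$ gives $H^4(F)=0$, but via Kato's isomorphism this only translates to $I_q^4 F = I_q^5 F$, and without a further inductive argument one cannot conclude from this alone that $I_q^4 F$ itself vanishes. The subform construction in \cref{subalbert} bypasses this difficulty. Consequently there is no real obstacle left for the present theorem; the genuine technical content has already been handled in \cref{suballem} and \cref{subalbert}, and the argument above simply packages those results together with the Pfister dichotomy.
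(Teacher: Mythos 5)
Your proof is correct and follows essentially the same route as the paper's: invoke \cref{subalbert} to extract an Albert subform of an arbitrary quadratic $4$-fold Pfister form, use linkedness (via \cref{Albertlinkage}) to make it isotropic, and conclude hyperbolicity from the Pfister-form dichotomy. The extra details you supply (the generation of $I_q^4 F$ by scalar multiples of $4$-fold Pfister forms, and the caveat about why Kato's isomorphism alone only gives $I_q^4 F = I_q^5 F$) are accurate but not a departure from the paper's argument.
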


\begin{proof}
Let $\varphi$ be a  4-fold Pfister form over $F$.
By   \cref{subalbert} it contains an Albert subform $\rho$.
Since $F$ is linked, $\rho$ must be isotropic.
Therefore $\varphi$ is hyperbolic.
\end{proof}

\begin{rem}\label{charnot2}
A result analogous to  \cref{suballem}  holds  for those fields $F$ with $\operatorname{char}(F) \neq 2$ containing a square root of $-1$.
Let $\pi=\langle \langle \alpha,\beta \rangle \rangle$ be a 2-fold Pfister form over $F$ and let $\lambda_1,\lambda_2,\lambda_3$ be three arbitrary elements in $F^\times$.
Then $\rho=\langle \lambda_1,\lambda_2,\lambda_3 \rangle \otimes \pi$ contains the subform $$\lambda_1 \langle \alpha,\beta \rangle \perp \lambda_2 \langle \alpha,\alpha \beta \rangle \perp \lambda_3 \langle \alpha \beta,\beta \rangle\,.$$ This has  trivial discriminant, and so it is an Albert form.
Hence, every $4$-fold Pfister form over $F$ contains an Albert form. 

This is not the case  in general for fields that do not contain a square root of $-1$.  
For example, the unique anisotropic $4$-fold Pfister form $\langle \langle -1,-1,-1,-1 \rangle \rangle$ over $\mathbb{R}$  clearly has no Albert subform, as all Albert forms over $\mathbb{R}$ are isotropic.
\end{rem}

\section*{Acknowledgements} 
We  thank the referee for  very useful suggestions. In particular pointing out that our results hold for a broader definition of Albert  $p$-forms than used in a preliminary version and  several resulting  simplifications to  the arguments in Section \ref{secal}.

The second author was supported by   
 \emph{Automorphism groups of locally finite trees} (G011012) with the Research Foundation, Flanders, Belgium (F.W.O.~Vlaanderen). %

\section*{Bibliography}
\bibliographystyle{amsalpha}

\begin{thebibliography}{EKM08}

\bibitem[AJ95]{AJ95}
R. Aravire and B. Jacob, \emph{{$p$}-algebras over maximally complete fields}, Proc. Sympos. Pure Math. \textbf{58} (1995), 27--49.

\bibitem[Bae82]{Baeza:1982}
R.~Baeza, \emph{Comparing {$u$}-invariants of fields of characteristic {$2$}},
  Bol. Soc. Brasil. Mat. \textbf{13} (1982), no.~1, 105--114.

\bibitem[BO13]{BO}
G.~Berhuy and F.~Oggier, \emph{An introduction to central simple algebras and  their applications to wireless communications}, Mathematical surveys and monographs, vol. 191, 2013. 



\bibitem[Cha15]{Chapman:2015}
A.~Chapman, \emph{Common subfields of {$p$}-algebras of prime degree}, Bull.
  Belg. Math. Soc. Simon Stevin \textbf{22} (2015), no.~4, 683--686.
  
\bibitem[Cha17]{Chapman:2017}
A.~Chapman, \emph{Symbol length of $p$-algebras of prime exponent}, J. Algebra
  Appl. \textbf{16} (2017), no.~5, 1750136, 9.


\bibitem[CDL16]{CDL}
A.~Chapman, A.~Dolphin and A.~Laghribi, \emph{Total linkage of quaternion algebras and {P}fister forms in characteristic two}, J. Pure and Appl. Algebra \textbf{220}(2016), no.~11, 3676--3691.

\bibitem[DQ17]{dolphin:conic}
A.~Dolphin and A.~Qu{\'e}guiner-Mathieu, \emph{Symplectic involutions, quadratic pairs and function fields of conics},  \textbf{221}(2017), no.~8, 1966--1980.



\bibitem[EKM08]{EKM}
R.~Elman, N.~Karpenko and A.~Merkurjev, \emph{The algebraic
  and geometric theory of quadratic forms}, American Mathematical Society
  Colloquium Publications, vol.~56, Amer. Math. Soc., Providence,
  RI, 2008. 

\bibitem[EL73]{ElmanLam:1973}
R.~Elman and T.~Y. Lam, \emph{Quadratic forms and the {$u$}-invariant
  {II}}, Invent. Math. \textbf{21} (1973), 125--137. 
  
\bibitem[EV05]{ElduqueVilla:2005}
A.~Elduque and O.~Villa, \emph{A note on the linkage of {H}urwitz algebras},
  Manuscripta Math. \textbf{117} (2005), no.~1, 105--110. 
  
\bibitem[Fai06]{Faivre:thesis}
F.~Faivre, \emph{Liaison des formes de {P}fister et corps de
  fonctions de quadriques en caract\'eristique $2$}, 2006, Thesis
  (Ph.D.)--Universit\'e de Franche-Comt\'e.

\bibitem[GS06]{GilleSzamuely2006}
P.~Gille and T.~Szamuely, \emph{Central simple algebras and {G}alois cohomology}, Cambridge Studies in Advanced Mathematics, vol. 101, Cambridge University Press, Cambridge, 2006. 

\bibitem[Kato82]{k1}
K.~Kato,
{\it Symmetric bilinear forms, quadratic forms and Milnor $K$-theory 
in characteristic two}. Invent.\ Math. \textbf{66} (1982), no.\ 3, 493--510.

\bibitem[Lam02]{Lam:2002}
T.~Y. Lam, \emph{On the linkage of quaternion algebras}, Bull. Belg. Math. Soc.
  Simon Stevin \textbf{9} (2002), no.~3, 415--418. 
  
\bibitem[MS89]{MammoneShapiro}
P.~Mammone and D.B.~Shapiro, \emph{The {A}lbert quadratic form for
  an algebra of degree four}, Proc. Amer. Math. Soc. \textbf{105} (1989),
  no.~3, 525--530.

\end{thebibliography}

\end{document}